\title{Galois points and rational functions with small value sets} 
\author{Satoru Fukasawa}
\subjclass[2020]{14H05, 11T06}
\keywords{Galois point, plane curve, finite field, rational function, value set}
\address{Department of Mathematical Sciences, Faculty of Science, Yamagata University, Kojirakawa-machi 1-4-12, Yamagata 990-8560, Japan} 
\email{s.fukasawa@sci.kj.yamagata-u.ac.jp}
\thanks{The author was partially supported by JSPS KAKENHI Grant Number JP19K03438.}
\newtheorem{theorem}{Theorem}
\newtheorem{corollary}{Corollary} 
\newtheorem{fact}{Fact}
\theoremstyle{definition}
\newtheorem{remark}{Remark}
\begin{document}
\begin{abstract} 
This paper presents a connection between Galois points and rational functions over a finite field with small value sets. 
This paper proves that the defining polynomial of any plane curve admitting two Galois points is an irreducible component of a polynomial obtained as a relation of two rational functions. 
A recent result of Bartoli, Borges, and Quoos implies that one of these rational functions over a finite field has a very small value set, under the assumption that Galois groups of two Galois points generate the semidrect product. 
When two Galois points are external, this paper proves that the defining polynomial is an irreducible component of a polynomial with separated variables. 
This connects the study of Galois points to that of polynomials with small value sets.   
\end{abstract}

\maketitle 

\section{Introduction} 
The purpose of this paper is to present a connection between Galois points and rational functions over a finite field with small value sets. 
 
Let $C \subset \mathbb{P}^2$ be an irreducible plane curve of degree $d >1$ over an algebraically closed field $k$ of characteristic $p \ge 0$ and let $k(C)$ be its function field. 
Taking a point $P \in \mathbb{P}^2$, we consider the projection $\pi_P: C \dashrightarrow \mathbb{P}^1$ from $P$. 
A point $P \in \mathbb{P}^2$ is called a {\it Galois point} if the field extension $k(C)/\pi_P^*k(\mathbb{P}^1)$ of function fields induced by $\pi_P$ is a Galois extension (\cite{fukasawa1, miura-yoshihara, yoshihara}).  
The associated Galois group is denoted by $G_P$. 
Many results on Galois points have been obtained; however, there are many open problems (see \cite{fukasawa1, open}). 

The present author and Speziali investigated plane curves with two outer Galois points $P_1, P_2 \in \mathbb{P}^2 \setminus C$ such that $\langle G_{P_1}, G_{P_2} \rangle=G_{P_1} \rtimes G_{P_2}$ (\cite{fukasawa-speziali}), and the present author studied plane curves admitting an inner Galois point $P_1 \in C \setminus {\rm Sing}(C)$ and an outer Galois point $P_2 \in \mathbb{P}^2 \setminus C$ such that $\langle G_{P_1}, G_{P_2} \rangle=G_{P_1} \rtimes G_{P_2}$ or $G_{P_1} \ltimes G_{P_2}$ (\cite{fukasawa2}). 
In a more general situation, this paper proves the following. 

\begin{theorem} \label{main} 
Let $C \subset \mathbb{P}^2$ be defined over a finite field $\mathbb{F}_q$ of $q$ elements.  
Assume that $C$ is irreducible over the algebraic closure $\overline{\mathbb{F}}_q$ of $\mathbb{F}_q$. 
Let $P_1=(1:0:0), P_2=(0:1:0) \in \mathbb{P}^2$. 
If $P_1$ and $P_2$ are Galois points such that all automorphisms in $G_{P_1} \cup G_{P_2}$ are defined over $\mathbb{F}_q$, and $|\langle G_{P_1}, G_{P_2} \rangle| <\infty$, then the following holds. 
\begin{itemize}
\item[(I)] There exist polynomials $f_1, g_1, f_2, g_2 \in \mathbb{F}_q[x]$ such that 
\begin{itemize}
\item[(a)] $f_i$ and $g_i$ are relatively prime for $i=1, 2$, 
\item[(b)] $\max\{\deg f_i, \deg g_i\} = |\langle G_{P_1}, G_{P_2} \rangle|/|G_{P_j}|$ for $i, j$ with $\{i, j\}=\{1, 2\}$, \item[(c)] the defining polynomial of $C$ is an irreducible component of 
$$f_1(x)g_2(y)-g_1(x)f_2(y)$$ over $\mathbb{F}_q$. 
\end{itemize}
\end{itemize} 
Hereafter, we assume the existence of polynomials $f_1, g_1, f_2, g_2 \in \mathbb{F}_q[x]$ satisfying conditions (a), (b) and (c) in (I). 
Then the following hold. 
\begin{itemize}
\item[(II)]  If $|\langle G_{P_1}, G_{P_2} \rangle|=|G_{P_1}| \times |G_{P_2}|$, then the curve $C$ is defined by $$f_1(x)g_2(y)-g_1(x)f_2(y)=0. $$ 
The converse holds, if $\deg f_1 \ne \deg g_1$ or $\deg f_2 \ne \deg g_2$. 
\item[(III)] $\langle G_{P_1}, G_{P_2} \rangle=G_{P_1} \rtimes G_{P_2}$ if and only if $\mathbb{F}_q(y)/\mathbb{F}_q(h_2(y))$ is a Galois extension for $h_2(y)=f_2(y)/g_2(y)$. 
\item[(IV)] Assume that $P_1, P_2 \in \mathbb{P}^2 \setminus C$.  
Then we can take $g_1(x)=g_2(x)=1$, namely, a defining polynomial of $C$ is an irreducible component of $f_1(x)-f_2(y)$ over $\mathbb{F}_q$. 
In this case, $|\langle G_{P_1}, G_{P_2} \rangle|=d^2$ if and only if $f_1(x)-f_2(y)$ is a defining polynomial. 
\end{itemize}
\end{theorem}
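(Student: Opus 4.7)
My approach is to translate the Galois-point hypothesis into the Galois theory of the extension $\mathbb{F}_q(C)/\mathbb{F}_q(h)$, where $h$ is a L\"uroth generator of $\mathbb{F}_q(x)\cap \mathbb{F}_q(y)$. Since $C$ is geometrically irreducible and the elements of $G_{P_1}\cup G_{P_2}$ are $\mathbb{F}_q$-rational, the Galois-point structure descends to $\mathbb{F}_q$: the extensions $\mathbb{F}_q(C)/\mathbb{F}_q(y)$ and $\mathbb{F}_q(C)/\mathbb{F}_q(x)$ are Galois with groups $G_{P_1}$ and $G_{P_2}$, so $N := \langle G_{P_1}, G_{P_2}\rangle$ acts on $\mathbb{F}_q(C)$ with fixed field $\mathbb{F}_q(x)\cap \mathbb{F}_q(y)$. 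L\"uroth's theorem over $\mathbb{F}_q$ gives $\mathbb{F}_q(x)\cap \mathbb{F}_q(y) = \mathbb{F}_q(h)$; writing $h = f_1(x)/g_1(x) = f_2(y)/g_2(y)$ in coprime form yields (a), while the identity $f_1(x)g_2(y) = g_1(x)f_2(y)$ in $\mathbb{F}_q(C)$ proves (c). Condition (b) follows from multiplicativity of degrees in the tower $\mathbb{F}_q(h)\subset \mathbb{F}_q(x)\subset \mathbb{F}_q(C)$: $\max(\deg f_i,\deg g_i) = [\mathbb{F}_q(x):\mathbb{F}_q(h)] = |N|/|G_{P_j}|$.

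For (II), the polynomial $F := f_1 g_2 - g_1 f_2$ has $x$-degree exactly $\max(\deg f_1, \deg g_1)$---its top $x$-coefficient $\alpha g_2(y) - \beta f_2(y)$ cannot vanish identically, else $h = f_2/g_2$ would be a constant---and symmetrically $y$-degree $\max(\deg f_2,\deg g_2)$. When $|N| = |G_{P_1}|\cdot |G_{P_2}|$, these match the bidegree of the defining polynomial, so $F$ is a scalar multiple of it; the converse reverses the same degree comparison, using the hypothesis $\deg f_1\ne \deg g_1$ or $\deg f_2\ne \deg g_2$ to ensure $F$ actually coincides with the defining polynomial rather than a proper power of it with extra factors. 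For (III), the Galois correspondence for $\mathbb{F}_q(C)/\mathbb{F}_q(h)$ with group $N$ identifies normality of $G_{P_1}$ in $N$ with Galoisness of $\mathbb{F}_q(y)/\mathbb{F}_q(h_2(y))$; since $\mathbb{F}_q(C) = \mathbb{F}_q(x,y)$ forces $G_{P_1}\cap G_{P_2} = \{e\}$, this normality is in turn equivalent to the internal semidirect decomposition $N = G_{P_1}\rtimes G_{P_2}$.

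For (IV), the pivotal observation is that in the outer case $G_{P_1}$ acts transitively on its fiber $\pi_{P_1}^{-1}(\infty) = C \cap \{Z = 0\}$ (Galois transitivity on fibers; the equality of sets uses $P_1\notin C$ so $Y \ne 0$ on this set), and consequently $C \cap \{Z = 0\}$ is a single $N$-orbit. Setting $a := h(C \cap \{Z = 0\})$, we get $a \in \mathbb{P}^1(\mathbb{F}_q)$ by Frobenius invariance and $h^{-1}(a) = C \cap \{Z = 0\}$ set-theoretically, since the $N$-orbits on $C$ are exactly the fibers of $h\colon C \to C/N = \mathbb{P}^1$. Applying a M\"obius transform $T$ over $\mathbb{F}_q$ with $T(a) = \infty$ produces a new L\"uroth generator $\tilde h = T(h)$ whose pole divisor on $C$ is supported exactly on $C\cap \{Z=0\}$. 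Then $\tilde h$, viewed as a rational function of $x$ in reduced form, can have no finite pole on $\mathbb{P}^1_x$ (which would lift under $\pi_{P_2}$ to a finite pole on $C$), so its denominator must be constant, giving $\tilde h \in \mathbb{F}_q[x]$, and symmetrically $\tilde h \in \mathbb{F}_q[y]$. The final equivalence $|N| = d^2 \Leftrightarrow f_1(x) - f_2(y)$ defines $C$ then follows from (II), since $|G_{P_1}| = |G_{P_2}| = d$ in the outer case. The main obstacle is precisely this fiber-transitivity step in (IV): without producing a value of $h$ whose $C$-preimage lies entirely in $C\cap \{Z=0\}$, one has no natural way to clear the denominators $g_1$ and $g_2$.
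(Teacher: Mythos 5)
Your proof is correct and follows essentially the same route as the paper: a L\"uroth generator of the common fixed field for (I), a degree/bidegree comparison for (II), the Galois correspondence for (III), and for (IV) the observation that $C \cap \{Z=0\}$ is a full $\langle G_{P_1}, G_{P_2}\rangle$-orbit (the paper phrases this as total ramification of the induced maps $\varphi_i$ at the image of the line $\overline{P_1P_2}$), which lets one normalize the generator to a polynomial in $x$ and in $y$. The only point to make explicit in (IV) is that your ``consequently'' also needs the symmetric fact that $C\cap\{Z=0\}$ is a single fiber of $\pi_{P_2}$ (using $P_2\notin C$), so that both $G_{P_1}$ and $G_{P_2}$ stabilize this set and the $N$-orbit does not extend beyond it.
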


\begin{remark}
\begin{itemize} 
\item[(a)] Theorem \ref{main} holds for any perfect field $k_0$, by replacing $\mathbb{F}_q$ by $k_0$. 
\item[(b)] In assertion (II), we always take $\deg f_1 \ne \deg g_1$, since if $\deg f_1=\deg g_1$, then $f_1/g_1=\alpha+f_{11}/g_1$ and $\mathbb{F}_q(f_1/g_1)=\mathbb{F}_q(f_{11}/g_1)$ for some $\alpha \in \mathbb{F}_q$ and $f_{11} \in \mathbb{F}_q[x]$ with $\deg f_{11} < \deg g_1$.  
\item[(c)] Galois points are defined over algebraically closed fields. 
Theorem \ref{main} suggests that it is good to define {\it a Galois point $P$ over a finite field $\mathbb{F}_q$} as an $\mathbb{F}_q$-rational point of $\mathbb{P}^2$ such that the extension $\mathbb{F}_q(C)/\mathbb{F}_q(L_1/L_2)$ is Galois, where $L_1, L_2 \in \mathbb{F}_q[X, Y, Z]$ are linearly independent homogeneous polynomials of degree one defining $P$.   
\end{itemize}
\end{remark}

{\it What are these rational functions $f_1/g_1$ and $f_2/g_2$?}
In a recent paper \cite{bbq}, Bartoli, Borges, and Quoos investigated the value sets of rational functions $h(x) \in \mathbb{F}_q(x)$, and obtained the following theorem.  

\begin{fact}[Bartoli, Borges, and Quoos] \label{value set} 
Let $f(x), g(x) \in \mathbb{F}_q[x]$ be relatively prime. 
If a rational function $h(x)=f(x)/g(x) \in \mathbb{F}_q(x)$ is such that $\mathbb{F}_q(x)/\mathbb{F}_q(h(x))$ is a Galois extension, then either 
$$ \# V_h=\left\lceil \frac{q+1}{\deg h} \right\rceil \ \mbox{ or } \ \# V_h=\left\lceil \frac{q+1}{\deg h} \right\rceil +1, $$
where $V_h=\{h(\alpha) \ | \ \alpha \in \mathbb{P}^1(\mathbb{F}_q)\} \subset \mathbb{P}^1(\mathbb{F}_q)$ and $\deg h=\max\{\deg f, \deg g\}$. 
\end{fact}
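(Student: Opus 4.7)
\textbf{Proof plan for Fact \ref{value set}.}
The plan is to identify $V_h$ with the set of $G$-orbits on $\mathbb{P}^1(\mathbb{F}_q)$, where $G := \mathrm{Gal}(\mathbb{F}_q(x)/\mathbb{F}_q(h))$, and then to bracket this orbit count between $\lceil (q+1)/n \rceil$ and $\lceil (q+1)/n \rceil+1$ (with $n := \deg h$) using a trivial fibre-counting lower bound and Burnside's lemma for the upper bound. The preliminary observation is that every $\mathbb{F}_q$-automorphism of $\mathbb{F}_q(x)$ has the form $x \mapsto (ax+b)/(cx+d)$, so $G$ embeds into $\mathrm{PGL}_2(\mathbb{F}_q)$ and in particular acts on the finite set $\mathbb{P}^1(\mathbb{F}_q)$, with $h$ realising the quotient.

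For each $\beta \in \mathbb{P}^1(\mathbb{F}_q)$, the geometric fibre $h^{-1}(\beta)$ is a single $G$-orbit of cardinality $n/e_\beta$, where $e_\beta$ is the common ramification index above $\beta$. Since $G \subset \mathrm{PGL}_2(\mathbb{F}_q)$ preserves $\mathbb{P}^1(\mathbb{F}_q)$, the moment one point of the fibre is $\mathbb{F}_q$-rational, the entire fibre must be. Consequently $V_h$ is precisely the set of $G$-orbits on $\mathbb{P}^1(\mathbb{F}_q)$, and
$$q+1 \;=\; \sum_{\beta \in V_h} \frac{n}{e_\beta}.$$
Each summand is at most $n$, so this identity immediately gives the lower bound $|V_h| \ge \lceil (q+1)/n \rceil$.

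For the matching upper bound I would invoke Burnside's orbit-counting lemma
$$|V_h| \;=\; \frac{1}{n}\sum_{\sigma \in G}\bigl|\mathrm{Fix}(\sigma) \cap \mathbb{P}^1(\mathbb{F}_q)\bigr|.$$
The identity contributes $q+1$, while each of the remaining $n-1$ elements of $G$ is a nontrivial element of $\mathrm{PGL}_2(\mathbb{F}_q)$ and so fixes at most two geometric points. This yields
$$|V_h| \;\le\; \frac{(q+1)+2(n-1)}{n} \;=\; \frac{q+1}{n} + 2 - \frac{2}{n},$$
and a short case analysis on the residue $r$ of $q+1$ modulo $n$ (distinguishing $r=0$, $r=1$, and $r\ge 2$) converts this into the integer bound $|V_h| \le \lceil (q+1)/n \rceil + 1$. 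The step that really demands care is the very first one, namely realising $G$ concretely as a subgroup of $\mathrm{PGL}_2(\mathbb{F}_q)$ so that the ``at most two fixed points'' estimate becomes available; once that identification is secured, everything else is orbit counting and a tidy comparison of integer parts.
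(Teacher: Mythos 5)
This statement is imported into the paper as a ``Fact'' with a citation to Bartoli--Borges--Quoos \cite{bbq}; the paper itself contains no proof of it, so there is nothing internal to compare against. Your argument, however, is correct and self-contained. The identification of $V_h$ with the set of $G$-orbits on $\mathbb{P}^1(\mathbb{F}_q)$ is sound: $\mathrm{Aut}(\mathbb{F}_q(x)/\mathbb{F}_q)=\mathrm{PGL}_2(\mathbb{F}_q)$, so $G$ acts on $\mathbb{P}^1(\mathbb{F}_q)$; the extension stays Galois of the same degree after passing to $\overline{\mathbb{F}}_q$, so geometric fibres of $h$ are single $G$-orbits; and since $G$ preserves $\mathbb{P}^1(\mathbb{F}_q)$, a fibre containing one rational point consists entirely of rational points. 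The fibre-size identity $q+1=\sum_{\beta\in V_h} n/e_\beta$ then gives the lower bound, and the Burnside count with the ``a nonidentity element of $\mathrm{PGL}_2$ fixes at most two points'' estimate gives $|V_h|\le (q+1)/n+2-2/n$, which your three-way case analysis on $q+1 \bmod n$ correctly converts to $|V_h|\le\lceil(q+1)/n\rceil+1$ (indeed, when $q+1\equiv 1 \pmod n$ it even forces $|V_h|=\lceil(q+1)/n\rceil$). The published proof in \cite{bbq} starts from the same orbit identification but controls the defect via the ramification/short-orbit structure of the cover (as echoed in the paper's Remark 2); your Burnside route buys the same bound without invoking any classification of subgroups of $\mathrm{PGL}_2(\mathbb{F}_q)$ or Riemann--Hurwitz, which is arguably more elementary.
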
 

Theorem \ref{main} and Fact \ref{value set} imply that the rational function $h_2(y)$ as in Theorem \ref{main} (III) has a very small value set. 
More precisely: 

\begin{corollary} 
Let $f_2(x), g_2(x) \in \mathbb{F}_q[x]$ be as in Theorem \ref{main} and let $h_2(x)=f_2(x)/g_2(x)$. 
If $\langle G_{P_1}, G_{P_2} \rangle = G_{P_1} \rtimes G_{P_2}$, then either
$$\# V_{h_2}=\left\lceil \frac{q+1}{\deg h_2} \right\rceil \ \mbox{ or } \ \# V_{h_2}=\left\lceil \frac{q+1}{\deg h_2} \right\rceil +1. $$
\end{corollary}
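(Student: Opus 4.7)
The plan is essentially to chain Theorem \ref{main}(III) with Fact \ref{value set}, with no intermediate arguments required. By Theorem \ref{main}(I)(a) the polynomials $f_2, g_2 \in \mathbb{F}_q[x]$ are coprime, so $h_2(x) = f_2(x)/g_2(x)$ is written in lowest terms and $\deg h_2 = \max\{\deg f_2, \deg g_2\}$ in the sense used by Bartoli--Borges--Quoos. The hypothesis $\langle G_{P_1}, G_{P_2}\rangle = G_{P_1} \rtimes G_{P_2}$ is precisely the group-theoretic side of the equivalence in Theorem \ref{main}(III), so it gives that $\mathbb{F}_q(y)/\mathbb{F}_q(h_2(y))$ is a Galois extension. (Only this implication is needed; the reverse direction of the \emph{iff} is not used.)

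With the Galois condition on $\mathbb{F}_q(y)/\mathbb{F}_q(h_2(y))$ in hand, $h_2$ satisfies all the hypotheses of Fact \ref{value set}, namely relative primality of numerator and denominator together with Galoisness of the induced extension. Applying that fact directly to $h_2$ yields $\#V_{h_2} = \lceil (q+1)/\deg h_2 \rceil$ or $\lceil (q+1)/\deg h_2 \rceil + 1$, which is exactly the claimed dichotomy. There is no genuine obstacle: the substantive content has been carried out in Theorem \ref{main}, and the corollary is merely the observation that the rational function $h_2$ produced there falls under the scope of Fact \ref{value set}. The standing hypotheses of Theorem \ref{main} (irreducibility of $C$ over $\overline{\mathbb{F}}_q$, $\mathbb{F}_q$-rationality of the Galois groups, and finiteness of $\langle G_{P_1}, G_{P_2}\rangle$) are inherited and require no re-verification.
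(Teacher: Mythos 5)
Your proposal is correct and follows exactly the route the paper intends: Theorem \ref{main}(III) converts the semidirect-product hypothesis into the Galoisness of $\mathbb{F}_q(y)/\mathbb{F}_q(h_2(y))$, and Fact \ref{value set} (with coprimality of $f_2, g_2$ from (I)(a)) then gives the stated dichotomy for $\#V_{h_2}$. Nothing further is needed.
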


Theorem \ref{main} (IV) connects the study of Galois points to that of polynomials over finite fields. 
Borges \cite{borges} established a connection between minimal value set polynomials (\cite{clms, mills}) and Frobenius nonclassical curves (\cite{hefez-voloch, stohr-voloch}). 
A theorem \cite[Corollary 3.5]{borges} of Borges implies the following. 

\begin{corollary} \label{Frobenius nonclassical}
Assume that $P_1, P_2 \in \mathbb{P}^2 \setminus C$. 
Let $f_1(x), f_2(x) \in \mathbb{F}_q[x]$ be polynomials as in Theorem \ref{main} and let $V_{f_1}', V_{f_2}'$ be their value sets, that is, $V_{f_i}'=\{f_i(\alpha) \ | \ \alpha \in \mathbb{F}_q\}$ for $i=1, 2$. 
If $f_1, f_2$ are minimal value set polynomials such that $V_{f_1}'=V_{f_2}'$ and, $|V_{f_1}'|>2$ or $|V_{f_1}'|=2=p$, then $C$ is $q$-Frobenius nonclassical. 
\end{corollary}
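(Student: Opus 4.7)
The plan is to derive Corollary \ref{Frobenius nonclassical} as a direct consequence of Theorem \ref{main} (IV) combined with Borges's Corollary 3.5 from \cite{borges}, which is already cited in the paragraph preceding the statement. Since both $P_1$ and $P_2$ are external Galois points, the hypotheses of Theorem \ref{main} (IV) are met, and a defining polynomial of $C$ appears as an $\mathbb{F}_q$-irreducible component of the separated-variable polynomial
\[
F(x,y) \;=\; f_1(x) - f_2(y).
\]
As $C$ is absolutely irreducible by the standing hypothesis on the curve, it is moreover an absolutely irreducible component of the plane curve $\{F = 0\}$.

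Next, I would invoke Borges's Corollary 3.5. Its hypotheses are exactly the ones placed on $f_1, f_2$ in the statement here: both polynomials are minimal value set polynomials over $\mathbb{F}_q$, they share a common value set $V'_{f_1} = V'_{f_2}$, and the sharpness condition $|V'_{f_1}| > 2$ or $|V'_{f_1}| = 2 = p$ is assumed. Borges's conclusion, applied to $F(x,y) = f_1(x) - f_2(y)$, is that every nonlinear (absolutely) irreducible component of $\{F = 0\}$ is $q$-Frobenius nonclassical.

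Finally, since $\deg C = d > 1$ by the standing assumption of the paper, the component $C$ is nonlinear, and therefore $q$-Frobenius nonclassical, as claimed.

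The main (indeed essentially the only) obstacle in writing this up is to align the precise wording of Borges's Corollary 3.5 in \cite{borges} with the componentwise formulation needed here: that is, one must confirm that the conclusion of that corollary applies to each nonlinear absolutely irreducible factor of a separated-variable polynomial of the form $f_1(x) - f_2(y)$, rather than merely to the full (possibly reducible) curve cut out by $F$. With that bookkeeping in place, nothing further is required, since the rest of the argument reduces to applying Theorem \ref{main} (IV) and quoting Borges's statement.
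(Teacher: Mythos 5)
Your outline is the natural first attempt, but it stops exactly where the real work begins, and the step you defer as ``bookkeeping'' is in fact the substance of the proof. Borges's Corollary 3.5 (and the Theorem 3.4 it rests on) carries the standing hypothesis that \emph{all} irreducible components of $f_1(x)-f_2(y)$ are defined over $\mathbb{F}_q$. In the present situation there is no reason for that to hold: Theorem \ref{main}~(IV) only tells you that the defining polynomial of $C$ is one $\mathbb{F}_q$-irreducible component of $f_1(x)-f_2(y)$; the other components may well live only over an extension field. So you cannot simply ``quote Borges's statement'' --- its hypotheses are not verified --- and the issue is not merely aligning a componentwise versus global formulation.

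The paper resolves this by reproving the relevant implication from Borges's underlying lemmas rather than citing the corollary. Concretely: by \cite[Theorem 2.2]{borges}, minimality of the value sets gives monic $T_i=\prod_{\gamma\in V_{f_i}'}(x-\gamma)$ and constants $\theta_i\in\mathbb{F}_q^*$ with $T_i(f_i)=\theta_i(x^q-x)f_{i,x}$; the hypothesis $V_{f_1}'=V_{f_2}'$ forces $T_1=T_2$, and \cite[Lemma 2.4(ii)]{borges} (using $|V_{f_1}'|>2$ or $|V_{f_1}'|=2=p$) forces $\theta_1=\theta_2$. Since $X-Y$ divides $T_1(X)-T_1(Y)$, one deduces that $f(x,y)=f_1(x)-f_2(y)$ divides $(x^q-x)f_x+(y^q-y)f_y$. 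Only then do \cite[Lemma 3.2]{borges} and \cite[Lemma 3.3 (i) $\Rightarrow$ (ii)]{borges} let you pass from the full separated-variable polynomial to the single $\mathbb{F}_q$-rational component $f_0$ defining $C$, concluding that $f_0$ divides $(x^q-x)f_{0,x}+(y^q-y)f_{0,y}$, i.e.\ that $C$ is $q$-Frobenius nonclassical. Your write-up needs this chain (or an equivalent justification that the rationality hypothesis in Corollary 3.5 can be dropped); as it stands, the proposal has a genuine gap at its central citation.
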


A typical example of curves satisfying the assumptions in Corollary \ref{Frobenius nonclassical} is the Fermat curve 
$$ x^{\frac{q-1}{q'-1}}+y^{\frac{q-1}{q'-1}}+1=0, $$
where $\mathbb{F}_{q'} \subset \mathbb{F}_q$. 
Points $(1:0:0)$, $(0:1:0)$ are outer Galois points (\cite{fukasawa1, miura-yoshihara, yoshihara}), and polynomials $x^{\frac{q-1}{q'-1}}$ and $-y^{\frac{q-1}{q'-1}}-1$ have the same minimal value set $\mathbb{F}_{q'}$ (\cite{borges}). 
Another example is found in \cite[Theorem 2]{borges-fukasawa}. 

\begin{remark}
Assume that $f(x) \in \mathbb{F}_q[x]$ and a field extension $\mathbb{F}_q(x)/\mathbb{F}_q(f(x))$ is Galois. 
It is well known that a place at infinity is a total ramification point and there exist at most two short orbits. 
It can be confirmed that $f(x)$ is a minimal value set polynomial, by a method similar to the proof of Fact \ref{value set} (see \cite[Proof of Theorem 2.1]{bbq}). 
\end{remark}

\section{Proofs} 

\begin{proof}[Proof of Theorem \ref{main}] 
Assume that points $P_1=(1:0:0), P_2=(0:1:0) \in \mathbb{P}^2$ are Galois points, and that the group $G:=\langle G_{P_1}, G_{P_2} \rangle$ is of finite order. 
The projections $\pi_{P_1}$ and $\pi_{P_2}$ from points $P_1$ and $P_2$ are represented by 
$$ \pi_{P_1}(x, y)=y \ \mbox{ and } \ \pi_{P_2}(x, y)=x $$
respectively. 
Since all elements of $G_{P_1} \cup G_{P_2}$ are defined over $\mathbb{F}_q$, 
it follows that $\mathbb{F}_q(C)^{G_{P_1}}=\mathbb{F}_q(y)$ and $\mathbb{F}_q(C)^{G_{P_2}}=\mathbb{F}_q(x)$. 
Since $|G| < \infty$, by L\"{u}roth's theorem, there exists a function $t \in \mathbb{F}_q(C)^{G}$ such that $\mathbb{F}_q(t)=\mathbb{F}_q(C)^{G}$. 
Since $\mathbb{F}_q(t) \subset \mathbb{F}_q(y)$ and $\mathbb{F}_q(t) \subset \mathbb{F}_q(x)$, there exist polynomials $f_2(y), g_2(y) \in \mathbb{F}_q[y]$ and $f_1(x), g_1(x) \in \mathbb{F}_q[x]$ such that    
$$t=f_2(y)/g_2(y) \ \mbox{ and } \ t=f_1(x)/g_1(x). $$  
We can assume that polynomials $f_i(x)$ and $g_i(x)$ are relatively prime for $i=1, 2$. 
Let $h_i(x)=f_i(x)/g_i(x)$ for $i=1, 2$. 
Since
$$\mathbb{F}_q(y)/\mathbb{F}_q(h_2(y))=\mathbb{F}_q(C)^{G_{P_1}}/\mathbb{F}_q(C)^{G}, \ \mathbb{F}_q(x)/\mathbb{F}_q(h_1(x))=\mathbb{F}_q(C)^{G_{P_2}}/\mathbb{F}_q(C)^{G}, $$ 
it follows that 
$\max\{\deg f_2, \deg g_2\}=|G|/|G_{P_1}|$ and $\max\{\deg f_1, \deg g_1\}=|G|/|G_{P_2}|$. 
Since $f_1(x)/g_1(x)=t=f_2(y)/g_2(y)$ in $\mathbb{F}_q(C)$, it follows that 
$$ f(x, y):=f_1(x)g_2(y)-g_1(x)f_2(y)=0 $$
in $\mathbb{F}_q(C)$.
Assertion (I) follows.  

Assume that $|G|=|G_{P_1}| \times |G_{P_2}|$. 
Note that
$$ |G_{P_1}|=|G|/|G_{P_2}|=\max\{\deg f_1, \deg g_1\}.$$
Since 
$$
\deg_x f(x, y) \le \max\{\deg f_1(x), \deg g_1(x)\}= |G_{P_1}|=\deg \pi_{P_1}, 
$$
it follows that $\deg_x f(x, y)=\deg \pi_{P_1}$ and $f(x, y)$ is a minimal polynomial of $x$ over $\overline{\mathbb{F}}_q(y)$. 
This implies that $f(x, y)$ is irreducible as an element of $\overline{\mathbb{F}}_q(y)[x]$. 
Therefore, $f(x, y)$ is irreducible in $\overline{\mathbb{F}}_q[x, y]$.  

Assume that $f(x, y)$ is a defining polynomial of $C$ and $\deg f_1 \ne \deg g_1$. 
Then $\max\{\deg f_1, \deg g_1\} = \deg \pi_{P_1}=|G_{P_1}|$.
Since $|G|/|G_{P_2}|=\max\{\deg f_1, \deg g_1\}$, it follows that 
$$ |G|=|G_{P_1}| \times |G_{P_2}|. $$
Assertion (II) follows. 

$G=G_{P_1} \rtimes G_{P_2}$ if and only if $G_{P_1}$ is a normal subgroup of $G$. 
Assertion (III) follows, by Galois theory. 

Assume that $P_1, P_2 \in \mathbb{P}^2 \setminus C$. 
Let $\varphi_i: \mathbb{P}^1 \rightarrow \mathbb{P}^1$ be the morphism corresponding to $\overline{\mathbb{F}}_q(C)^{G_{P_i}}/\overline{\mathbb{F}}_q(C)^{G}$ for $i=1,2$.  
Let $Q$ be a place of $\overline{\mathbb{F}}_q(C)$ coming from $C \cap \overline{P_1P_2}$, where $\overline{P_1P_2}$ be a line passing through $P_1$ and $P_2$. 
Since the fiber of $\varphi_i(\pi_{P_i}(Q))$ for the covering $\varphi_i \circ \pi_{P_i}$ coincides with the orbit $G \cdot Q$ (see \cite[III.7.1]{stichtenoth}), it follows that 
$$ \varphi_i^{-1} (\varphi_i(\pi_{P_i}(Q)))=\pi_{P_i}(G \cdot Q),  $$
for $i=1, 2$. 
Since $P_1, P_2 \in \mathbb{P}^2 \setminus C$, it follows that 
$$\pi_{P_i}(G \cdot Q)=\{\pi_{P_i}(Q)\}, $$ 
and that $\varphi_i$ is totally ramified at $\pi_{P_i}(Q)$, for $i=1, 2$. 
We take a system $(Y:Z)$ of coordinates on $\pi_{P_1}(C) \cong \mathbb{P}^1$ (resp. a system $(X:Z)$ of coordinates on $\pi_{P_2}(C) \cong \mathbb{P}^1$) such that $\pi_{P_1}(Q)=(1:0)$ (resp. $\pi_{P_2}(Q)=(1:0)$). 
Note that 
$$\varphi_1(\pi_{P_1}(Q))=\varphi_2(\pi_{P_2}(Q)).$$ 
We take a system $(t:1)$ of coordinates on $\varphi_1(\pi_{P_1}(C))=\varphi_2(\pi_{P_2}(C)) \cong \mathbb{P}^1$ such that 
$$\varphi_1(\pi_{P_1}(Q))=(1:0)=\varphi_2(\pi_{P_2}(Q)).$$ 
Since $\varphi_1$ (resp. $\varphi_2$) is totally ramified at $(1:0)$ and $\varphi_1(1:0)=(1:0)$ (resp. $\varphi_2(1:0)=(1:0)$), it follows that $\varphi_1(y:1)=(f_2(y):1)$ (resp. $\varphi_2(x:1)=f_1(x)$) for some polynomial $f_2(y) \in \mathbb{F}_q[y]$ (resp. $f_1(x) \in \mathbb{F}_q[x]$). 
Since $f_2(y)=t=f_1(x)$ in $\mathbb{F}_q(C)$, the former assertion of (IV) follows. 
The latter assertion of (IV) comes from assertion (II). 
\end{proof}

Corollary \ref{Frobenius nonclassical} is derived from Borges's theorem \cite[Corollary 3.5]{borges}. 
In \cite[Theorem 3.4, Corollary 3.5]{borges}, it is assumed that all irreducible components of $f(x)-g(y)$ are defined over $\mathbb{F}_q$. 
Therefore, we confirm that the reasoning in Borges's paper \cite{borges} can be applied to our case, more precisely, we confirm that any component of $f_1(x)-f_2(y)$ defined over $\mathbb{F}_q$ is $q$-Frobenius nonclassical, under the assumption on $f_1, f_2$ as in Corollary \ref{Frobenius nonclassical}. 

\begin{proof}[Proof of Corollary \ref{Frobenius nonclassical}]
Let $P_1, P_2 \in \mathbb{P}^2 \setminus C$, and let $f_1, f_2 \in \mathbb{F}_q[x]$ be polynomials as in Theorem \ref{main}. 
Assume that $f_1, f_2$ are minimal value set polynomials such that $V_{f_1}'=V_{f_2}'$ and, $|V_{f_1}'|>2$ or $|V_{f_1}'|=2=p$. 
By \cite[Theorem 2.2]{borges}, there exist $\theta_i \in \mathbb{F}_q^*$ and a monic polynomial 
$T_i=\prod_{\gamma \in V_{f_i}'} (x-\gamma) \in \mathbb{F}_q[x]$ such that 
$$ T_i(f_i)=\theta_i(x^q-x)f_{i, x} $$
for $i=1, 2$, where $f_{i, x}$ is the formal derivative of $f_i$ by $x$. 
Since $V_{f_1}'=V_{f_2}'$, it follows that $T_1=T_2$. 
By \cite[Lemma 2.4(ii)]{borges}, $\theta_1=\theta_2$. 
Since $X-Y$ divides $T_1(X)-T_1(Y)$, it follows that $f(x, y)=f_1(x)-f_2(y)$ divides 
$$ (x^q-x)f_x+(y^q-y)f_y=(x^q-x)f_{1, x}-(y^q-y)f_{2, y}. $$
Since the defining polynomial $f_0$ of $C$ is an irreducible component of $f(x, y)$, it follows from \cite[Lemmas 3.2]{borges} and \cite[Lemma 3.3 (i) $\Rightarrow$ (ii)]{borges} that $f_0$ divides
$$ (x^q-x)f_{0, x}+(y^q-y)f_{0, y}, $$
that is, $C$ is $q$-Frobenius nonclassical. 
\end{proof}

\begin{center} {\bf Acknowledgements} \end{center} 
The author is grateful to Doctor Kazuki Higashine for helpful discussions.

\end{document}